\patchcmd\Gread@eps{\@inputcheck#1 }{\@inputcheck"#1"\relax}{}{}
\newtheorem{theorem}{Theorem}[section]
\newtheorem{proposition}[theorem]{Proposition}
\newtheorem{corollary}[theorem]{Corollary}
\newtheorem{remark}[theorem]{Remark}
\newtheorem{example}[theorem]{Example}
\newcommand{\qed}{\hfill $\square$\medskip}
\begin{document} 

\title{Co-even Domination Number of a Modified Graph by Operations 
	on a Vertex or an Edge}

\author{
Nima Ghanbari$^1$ \and Saeid Alikhani$^1$ \and Mohammad Ali Dehghanizadeh$^2$
}

\date{\today}

\maketitle

\begin{center}
$^1$Department of Mathematical Sciences, Yazd University, 89195-741, Yazd, Iran\\
\medskip
$^2$Department of Mathematics, Technical and Vocational University, Tehran, Iran
\bigskip

{\tt n.ghanbari.math@gmail.com, ~~alikhani@yazd.ac.ir,~~Mdehghanizadeh@tvu.ac.ir }
\end{center}


\begin{abstract}
Let $G=(V,E)$ be a simple graph. A dominating set of $G$ is a subset $D\subseteq V$ such that every vertex not in $D$ is adjacent to at least one vertex in $D$.
The cardinality of a smallest dominating set of $G$, denoted by $\gamma(G)$, is the domination number of $G$. A dominating set $D$ is called co-even dominating set if the degree of vertex $v$ is even number for all $v\in V-D$. 
 The cardinality of a smallest co-even dominating set of $G$, denoted by $\gamma _{coe}(G)$, is the co-even domination number of $G$.
In this paper we study co-even domination number of graphs which constructed by some operations on a vertex or an edge of a graph.
\end{abstract}

\noindent{\bf Keywords:} domination number, co-even dominating set, vertex removal, edge removal, contraction

\medskip
\noindent{\bf AMS Subj.\ Class.:}  05C69, 05C76

\section{Introduction}

Let $G = (V,E)$ be a  graph with vertex set $V$ and edge set $E$. Throughout this paper, we consider graphs without loops and directed edges.
For each vertex $v\in V$, the set $N_G(v)=\{u\in V | uv \in E\}$ refers to the open neighbourhood of $v$ and the set $N_G[v]=N_G(v)\cup \{v\}$ refers to the closed neighbourhood of $v$ in $G$. The degree of $v$, denoted by $\deg(v)$, is the cardinality of $N_G(v)$.
 A set $D\subseteq V$ is a  dominating set if every vertex in $V- D$ is adjacent to at least one vertex in $D$.
The  domination number $\gamma(G)$ is the minimum cardinality of a dominating set in $G$. There are various domination numbers in the literature.
For a detailed treatment of domination theory, the reader is referred to \cite{domination}.

\medskip

A dominating set $D$ is called a co-even dominating set, if the degree of 
each vertex in 
$ V-D$ is even (\cite{Sha}). The cardinality of a smallest co-even dominating set of $G$, denoted by $\gamma _{coe}(G)$, is the co-even domination number of $G$.  Ghanbari in \cite{Nima}, considered  binary operations of graphs and presented some bounds  for co-even domination number of
 join, corona, neighbourhood corona, and Haj\'{o}s sum of two graphs. In this paper we present more results for co-even domination number.
 
\medskip

In the next Section, first we mention the definition of vertex and edge removal of a graph and then study the co-even domination number  of a graph which constructed by vertex and edge removal and find some bounds for them. In Section 3, we mention the definition of vertex and edge contraction and  study co-even domination number of vertex and edge contraction of a graph. We find some bounds for them, and finally, we present upper and lower bounds for co-even domination number of a graph regarding vertex (edge) removal and contraction.

\section{Vertex and edge removal of a graph}
The graph $G-v$ is a graph that is made  by deleting the vertex $v$ and all edges connected to $v$ from the graph $G$ and the graph $G-e$ is a graph that obtained from $G$ by simply removing the  edge $e$. Our main results in this section are  in obtaining 
some bounds for the co-even domination number of vertex and edge removal of a graph. First we state some known results.

	\begin{proposition}{\rm\cite{Sha}}\label{pro-sha}
Let $G=(V,E)$ be a graph and $D$ is a co-even dominating set of $G$. Then,
\begin{itemize}
\item[(i)]
All vertices of odd or zero degrees belong to every co-even dominating set.
\item[(ii)]
$\deg(v)\geq 2$, for all $v\in V-D$.
\item[(iii)]
$\gamma (G) \leq \gamma_{coe} (G).$
\end{itemize}
	\end{proposition}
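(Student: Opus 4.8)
The plan is to verify each of the three parts directly from the two defining requirements of a co-even dominating set, namely the domination condition and the even-degree condition; no part needs more than a short case analysis, so I expect the argument to be entirely routine.

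For part (i) I would fix an arbitrary co-even dominating set $D$ and split the vertices to be handled into two types. If $v$ has odd degree, then the even-degree condition (every vertex of $V-D$ has even degree) forbids $v$ from lying in $V-D$, whence $v\in D$. If $v$ has degree $0$, then $v$ is isolated; here the even-degree condition is of no help, since $0$ is itself even, so instead I would invoke the domination condition: an isolated vertex has no neighbour and therefore cannot be dominated, forcing $v\in D$. As $D$ was arbitrary, every odd-degree or zero-degree vertex lies in every co-even dominating set. The one point needing care is precisely this split: the odd case is governed by evenness, while the zero-degree case is governed by domination rather than by parity.

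For part (ii) I would take any $v\in V-D$. The even-degree condition gives that $deg(v)$ is even, while the domination condition gives $deg(v)\geq 1$, since $v\notin D$ must be adjacent to at least one vertex of $D$. An even integer that is at least $1$ is at least $2$, so $deg(v)\geq 2$; this is also consistent with part (i), which has already removed the degree-$0$ and degree-$1$ vertices from $V-D$. Finally, for part (iii) I would use the trivial containment of solution families: by definition every co-even dominating set is in particular a dominating set. Taking a minimum co-even dominating set $D$ with $|D|=\gamma_{coe}(G)$, the set $D$ is then a dominating set, so $\gamma(G)\leq |D|=\gamma_{coe}(G)$.

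Since each of the three deductions is a single step, there is no substantive obstacle; the only place demanding attention is keeping straight which of the two defining conditions is responsible for each conclusion, most notably that isolated vertices are forced into $D$ by the domination requirement and not by the parity requirement.
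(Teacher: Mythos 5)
Your proof is correct: the odd-degree case of (i) follows from the parity condition, the isolated-vertex case from the domination condition, (ii) combines both, and (iii) is the containment of the two families of sets. The paper states this proposition as a known result cited from \cite{Sha} and gives no proof of its own, so there is nothing to compare against; your direct verification is exactly the standard argument one would expect.
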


By the definition of co-even domination number, we have the following easy result:

\begin{proposition}\label{pro-disconnect}
	Let $G$ be a disconnected graph with components $G_1$ and $G_2$. Then
	$$\gamma _{coe}(G)=\gamma _{coe}(G_1)+\gamma _{coe}(G_2).$$
\end{proposition}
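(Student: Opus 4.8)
The plan is to establish the two inequalities $\gamma_{coe}(G) \le \gamma_{coe}(G_1) + \gamma_{coe}(G_2)$ and $\gamma_{coe}(G) \ge \gamma_{coe}(G_1) + \gamma_{coe}(G_2)$ separately, and the whole argument rests on a single structural observation: since $G_1$ and $G_2$ are distinct connected components, there are no edges joining them, so for every vertex $v \in V(G_i)$ we have $\deg_{G}(v) = \deg_{G_i}(v)$. In particular the parity of a vertex's degree is the same whether it is read off in $G$ or in its own component. This is precisely the point at which co-even domination differs from ordinary domination, and it is what must be checked to make additivity go through.

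For the upper bound I would take minimum co-even dominating sets $D_1$ of $G_1$ and $D_2$ of $G_2$ and set $D = D_1 \cup D_2$. Since $V(G_1) \cap V(G_2) = \emptyset$, this union is disjoint and $|D| = |D_1| + |D_2|$. It then remains to verify that $D$ is a co-even dominating set of $G$. Domination is immediate: any $v \in V(G) - D$ lies in exactly one $V(G_i)$, is not in $D_i$, and therefore has a neighbour in $D_i \subseteq D$, since no cross-component edges exist and domination is thus component-local. The even-degree side condition follows from the degree-preservation observation: such a $v$ has $\deg_{G}(v) = \deg_{G_i}(v)$, which is even because $D_i$ is co-even in $G_i$. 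Hence $\gamma_{coe}(G) \le |D| = \gamma_{coe}(G_1) + \gamma_{coe}(G_2)$.

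For the lower bound I would start from a minimum co-even dominating set $D$ of $G$ and restrict it to the components, writing $D_i = D \cap V(G_i)$. Again $|D| = |D_1| + |D_2|$ by disjointness. I claim each $D_i$ is a co-even dominating set of $G_i$: a vertex $v \in V(G_i) - D_i$ also lies in $V(G) - D$, so it is dominated by some neighbour in $D$; that neighbour lies in $V(G_i)$ because there are no cross edges, hence it lies in $D_i$, giving domination inside $G_i$. The even-degree requirement transfers via $\deg_{G_i}(v) = \deg_{G}(v)$, which is even since $D$ is co-even in $G$. Therefore $|D_i| \ge \gamma_{coe}(G_i)$, and summing gives $\gamma_{coe}(G) = |D| \ge \gamma_{coe}(G_1) + \gamma_{coe}(G_2)$.

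Combining the two inequalities yields the claimed equality. The argument is essentially routine, and the only step I would flag as the genuine crux is that the even-degree condition is inherited in both directions; this works precisely because degrees are unchanged when passing between $G$ and its components, so no new odd-degree vertices are created or destroyed by the restriction or the union.
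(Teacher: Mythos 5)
Your proof is correct and is exactly the standard additivity argument the paper has in mind; the paper itself states this proposition without proof, calling it an easy consequence of the definition. Your identification of the crux --- that degrees, and hence their parities, are unchanged between $G$ and its components because there are no cross-component edges --- is precisely the point that makes the result immediate.
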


Now we consider to the vertex removal of a graph and find an upper bound and a lower one for co-even domination number of the constructed graph.

\begin{theorem}\label{G-v}
Let $G=(V,E)$ be a graph and $v\in V$. Then,
$$\gamma _{coe}(G) - \deg(v) -1 \leq \gamma _{coe}(G-v)\leq \gamma _{coe}(G) + \deg(v) -1.$$
\end{theorem}

\begin{proof}
 Suppose that $v\in V$ and $D_{coe}(G)$ is co-even dominating set of $G$. First we find the upper bound for $\gamma _{coe}(G-v)$. We consider the following cases:
\begin{itemize}
\item[(i)]
$\deg(v)$ is even and $v\notin D_{coe}(G)$. Then at least one of the neighbours of $v$ should be in $D_{coe}(G)$. Now by adding all other neighbours of $v$ in $D_{coe}(G)$, we have a co-even dominating set for $G-v$. The size of this set is at most
$\gamma _{coe}(G) -1 + \deg(v) $.
\item[(ii)]
$\deg(v)$ is even and $v\in D_{coe}(G)$. By removing $v$ and all edges related to that, some of the vertices in its neighbour, may not dominate with any other vertex. So by adding all of the neighbours of $v$ in $D_{coe}(G)-\{v\}$, we have a co-even dominating set for $G-v$. So $\gamma _{coe}(G-v)\leq \gamma _{coe}(G) + \deg(v) -1$.
\item[(iii)]
$\deg(v)$ is odd. Then by Proposition \ref{pro-sha}, $v\in D_{coe}(G)$. Now by the same argument as case (ii), we have 
$\gamma _{coe}(G-v)\leq \gamma _{coe}(G) + \deg(v) -1$.
\end{itemize}
Therefore $\gamma _{coe}(G-v)\leq \gamma _{coe}(G) + \deg(v) -1$. Now we find the lower bound for $\gamma _{coe}(G-v)$.
First we remove $v$ and all corresponding edges to that. Now we find a co-even dominating set for $G-v$. Suppose that this set is $D_{coe}(G-v)$. One can easily check that $D_{coe}(G-v)\cup N_G[v]$ is a co-even dominating set of $G$. So 
$$\gamma _{coe}(G)\leq \gamma _{coe}(G-v) + \deg(v)+1,$$
and therefore we have the result.\qed
\end{proof}

\begin{remark}
The  bounds in Theorem \ref{G-v} are sharp. For the upper bound, it suffices to consider $G$ as shown in Figure \ref{fig1}. The set of black vertices in $G$ is a co-even dominating set of $G$. Now, by removing vertex $v$ with degree $4$, the set of black vertices is a co-even dominating set of $G-v$, and $\gamma _{coe}(G-v)= \gamma _{coe}(G) + \deg(v) -1$. For the lower bound, it suffices to consider $H$ as shown in Figure \ref{h-vlower}. The set of black vertices in $H$ and $H-v$ are co-even dominating sets of them, respectively. So $\gamma _{coe}(H-v)= \gamma _{coe}(H) - \deg(v) -1$.
\end{remark}

	\begin{figure}
		\begin{center}
			\psscalebox{0.5 0.5}
{
\begin{pspicture}(0,-8.299306)(14.5971155,-0.097916566)
\psline[linecolor=black, linewidth=0.08](0.4,-2.2993054)(2.4,-0.2993054)(2.4,-0.2993054)
\psline[linecolor=black, linewidth=0.08](0.4,-2.2993054)(2.4,-2.2993054)(2.4,-2.2993054)
\psline[linecolor=black, linewidth=0.08](0.4,-2.2993054)(2.4,-4.2993054)(2.4,-4.2993054)
\psline[linecolor=black, linewidth=0.08](0.4,-2.2993054)(2.4,-6.2993054)(2.4,-6.2993054)
\psline[linecolor=black, linewidth=0.08](2.4,-2.2993054)(3.2,-1.4993054)(3.2,-3.0993054)(2.4,-2.2993054)(2.4,-2.2993054)
\psline[linecolor=black, linewidth=0.08](2.4,-0.2993054)(6.0,-2.2993054)(6.0,-2.2993054)
\psline[linecolor=black, linewidth=0.08](2.4,-4.2993054)(6.0,-2.2993054)(6.0,-2.2993054)
\psline[linecolor=black, linewidth=0.08](2.4,-6.2993054)(6.0,-2.2993054)(6.0,-2.2993054)
\psdots[linecolor=black, dotsize=0.4](2.4,-2.2993054)
\psdots[linecolor=black, dotsize=0.4](6.0,-2.2993054)
\psline[linecolor=black, linewidth=0.08](10.8,-2.2993054)(11.6,-1.4993054)(11.6,-3.0993054)(10.8,-2.2993054)(10.8,-2.2993054)
\psline[linecolor=black, linewidth=0.08](10.8,-0.2993054)(14.4,-2.2993054)(14.4,-2.2993054)
\psline[linecolor=black, linewidth=0.08](10.8,-4.2993054)(14.4,-2.2993054)(14.4,-2.2993054)
\psline[linecolor=black, linewidth=0.08](10.8,-6.2993054)(14.4,-2.2993054)(14.4,-2.2993054)
\psdots[linecolor=black, dotsize=0.4](10.8,-2.2993054)
\psdots[linecolor=black, dotsize=0.4](14.4,-2.2993054)
\psdots[linecolor=black, dotsize=0.4](10.8,-0.2993054)
\psdots[linecolor=black, dotsize=0.4](10.8,-4.2993054)
\psdots[linecolor=black, dotsize=0.4](10.8,-6.2993054)
\psdots[linecolor=black, dotstyle=o, dotsize=0.4, fillcolor=white](11.6,-1.4993054)
\psdots[linecolor=black, dotstyle=o, dotsize=0.4, fillcolor=white](11.6,-3.0993054)
\psdots[linecolor=black, dotstyle=o, dotsize=0.4, fillcolor=white](3.2,-1.4993054)
\psdots[linecolor=black, dotstyle=o, dotsize=0.4, fillcolor=white](3.2,-3.0993054)
\psdots[linecolor=black, dotstyle=o, dotsize=0.4, fillcolor=white](2.4,-0.2993054)
\psdots[linecolor=black, dotstyle=o, dotsize=0.4, fillcolor=white](0.4,-2.2993054)
\psdots[linecolor=black, dotstyle=o, dotsize=0.4, fillcolor=white](2.4,-4.2993054)
\psdots[linecolor=black, dotstyle=o, dotsize=0.4, fillcolor=white](2.4,-6.2993054)
\rput[bl](2.0,-8.299305){\LARGE{$G$}}
\rput[bl](11.6,-8.299305){\LARGE{$G-v$}}
\rput[bl](0.0,-2.6993055){$v$}
\end{pspicture}
}
		\end{center}
		\caption{Graphs $G$ and $G-v$} \label{fig1}
	\end{figure}
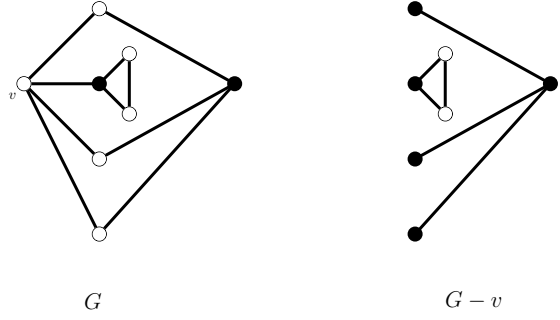

	\begin{figure}
		\begin{center}
			\psscalebox{0.5 0.5}
{
\begin{pspicture}(0,-5.1014423)(15.801389,1.4956731)
\psdots[linecolor=black, dotsize=0.4](4.0,1.2985578)
\psdots[linecolor=black, dotsize=0.4](4.0,0.49855775)
\psline[linecolor=black, linewidth=0.08](4.0,1.2985578)(2.4,0.8985577)(4.0,0.49855775)(4.0,0.49855775)
\psdots[linecolor=black, dotsize=0.4](4.0,-0.70144224)
\psdots[linecolor=black, dotsize=0.4](4.0,-1.5014423)
\psline[linecolor=black, linewidth=0.08](4.0,-0.70144224)(2.4,-1.1014422)(4.0,-1.5014423)(4.0,-1.5014423)
\psdots[linecolor=black, dotsize=0.4](4.0,-2.7014422)
\psdots[linecolor=black, dotsize=0.4](4.0,-3.5014422)
\psline[linecolor=black, linewidth=0.08](4.0,-2.7014422)(2.4,-3.1014423)(4.0,-3.5014422)(4.0,-3.5014422)
\psline[linecolor=black, linewidth=0.08](2.4,0.8985577)(0.4,-1.1014422)(2.4,-1.1014422)(0.4,-1.1014422)
\psline[linecolor=black, linewidth=0.08](0.4,-1.1014422)(2.4,-3.1014423)(2.4,-3.1014423)
\psline[linecolor=black, linewidth=0.08](4.0,1.2985578)(6.8,-0.30144227)(6.8,-0.30144227)
\psline[linecolor=black, linewidth=0.08](4.0,-3.5014422)(6.8,-1.9014423)(6.8,-1.9014423)
\psline[linecolor=black, linewidth=0.08](4.0,0.49855775)(6.8,-0.30144227)(6.8,-0.30144227)
\psline[linecolor=black, linewidth=0.08](4.0,-0.70144224)(6.8,-0.30144227)(6.8,-0.30144227)
\psline[linecolor=black, linewidth=0.08](4.0,-1.5014423)(6.8,-0.30144227)(6.8,-0.30144227)
\psline[linecolor=black, linewidth=0.08](4.0,-2.7014422)(6.8,-0.30144227)(6.8,-0.30144227)
\psline[linecolor=black, linewidth=0.08](4.0,-3.5014422)(6.8,-0.30144227)(6.8,-0.30144227)
\psline[linecolor=black, linewidth=0.08](4.0,-2.7014422)(6.8,-1.9014423)
\psline[linecolor=black, linewidth=0.08](4.0,-1.5014423)(6.8,-1.9014423)
\psline[linecolor=black, linewidth=0.08](4.0,-0.70144224)(6.8,-1.9014423)
\psline[linecolor=black, linewidth=0.08](4.0,0.49855775)(6.8,-1.9014423)(6.8,-1.9014423)
\psline[linecolor=black, linewidth=0.08](4.0,1.2985578)(6.8,-1.9014423)
\psdots[linecolor=black, dotstyle=o, dotsize=0.4, fillcolor=white](6.8,-0.30144227)
\psdots[linecolor=black, dotstyle=o, dotsize=0.4, fillcolor=white](6.8,-1.9014423)
\psdots[linecolor=black, dotsize=0.4](12.8,1.2985578)
\psdots[linecolor=black, dotsize=0.4](12.8,0.49855775)
\psline[linecolor=black, linewidth=0.08](12.8,1.2985578)(11.2,0.8985577)(12.8,0.49855775)(12.8,0.49855775)
\psdots[linecolor=black, dotsize=0.4](12.8,-0.70144224)
\psdots[linecolor=black, dotsize=0.4](12.8,-1.5014423)
\psline[linecolor=black, linewidth=0.08](12.8,-0.70144224)(11.2,-1.1014422)(12.8,-1.5014423)(12.8,-1.5014423)
\psdots[linecolor=black, dotsize=0.4](12.8,-2.7014422)
\psdots[linecolor=black, dotsize=0.4](12.8,-3.5014422)
\psline[linecolor=black, linewidth=0.08](12.8,-2.7014422)(11.2,-3.1014423)(12.8,-3.5014422)(12.8,-3.5014422)
\psline[linecolor=black, linewidth=0.08](12.8,1.2985578)(15.6,-0.30144227)(15.6,-0.30144227)
\psline[linecolor=black, linewidth=0.08](12.8,-3.5014422)(15.6,-1.9014423)(15.6,-1.9014423)
\psline[linecolor=black, linewidth=0.08](12.8,0.49855775)(15.6,-0.30144227)(15.6,-0.30144227)
\psline[linecolor=black, linewidth=0.08](12.8,-0.70144224)(15.6,-0.30144227)(15.6,-0.30144227)
\psline[linecolor=black, linewidth=0.08](12.8,-1.5014423)(15.6,-0.30144227)(15.6,-0.30144227)
\psline[linecolor=black, linewidth=0.08](12.8,-2.7014422)(15.6,-0.30144227)(15.6,-0.30144227)
\psline[linecolor=black, linewidth=0.08](12.8,-3.5014422)(15.6,-0.30144227)(15.6,-0.30144227)
\psline[linecolor=black, linewidth=0.08](12.8,-2.7014422)(15.6,-1.9014423)
\psline[linecolor=black, linewidth=0.08](12.8,-1.5014423)(15.6,-1.9014423)
\psline[linecolor=black, linewidth=0.08](12.8,-0.70144224)(15.6,-1.9014423)
\psline[linecolor=black, linewidth=0.08](12.8,0.49855775)(15.6,-1.9014423)(15.6,-1.9014423)
\psline[linecolor=black, linewidth=0.08](12.8,1.2985578)(15.6,-1.9014423)
\psdots[linecolor=black, dotstyle=o, dotsize=0.4, fillcolor=white](15.6,-0.30144227)
\psdots[linecolor=black, dotstyle=o, dotsize=0.4, fillcolor=white](15.6,-1.9014423)
\psdots[linecolor=black, dotstyle=o, dotsize=0.4, fillcolor=white](11.2,0.8985577)
\psdots[linecolor=black, dotstyle=o, dotsize=0.4, fillcolor=white](11.2,-1.1014422)
\psdots[linecolor=black, dotstyle=o, dotsize=0.4, fillcolor=white](11.2,-3.1014423)
\psdots[linecolor=black, dotsize=0.4](2.4,0.8985577)
\psdots[linecolor=black, dotsize=0.4](2.4,-1.1014422)
\psdots[linecolor=black, dotsize=0.4](2.4,-3.1014423)
\psdots[linecolor=black, dotsize=0.4](0.4,-1.1014422)
\rput[bl](2.8,-5.1014423){\LARGE{$H$}}
\rput[bl](12.0,-5.1014423){\LARGE{$H-v$}}
\rput[bl](0.0,-1.5014423){$v$}
\end{pspicture}
}
		\end{center}
		\caption{Graphs $H$ and $H-v$} \label{h-vlower}
	\end{figure}
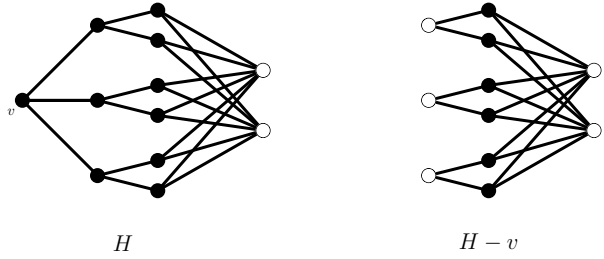

The following example shows that $\gamma _{coe}(G)$ and $\gamma _{coe}(G-v)$ can be equal.

\begin{example}
Consider the graphs $H$ and $H-v$ as shown in Figure \ref{fig2-example}. Obviously, the set of black vertices of each graph is co-even dominating set with smallest size. Therefore, there are some graphs $G$ such that $\gamma _{coe}(G)=\gamma _{coe}(G-v)$.
\end{example}

	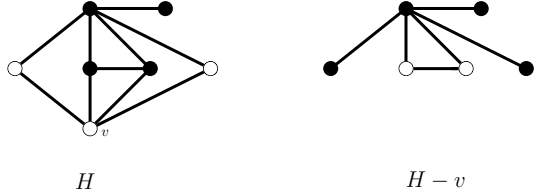
\begin{figure}[!h]
		\begin{center}
			\psscalebox{0.5 0.5}
{
\begin{pspicture}(0,-5.5014424)(13.998505,-0.50432676)
\psline[linecolor=black, linewidth=0.08](0.20138885,-2.3014421)(2.2013888,-0.70144224)(2.2013888,-2.3014421)(2.2013888,-3.9014423)(0.20138885,-2.3014421)(0.20138885,-2.3014421)
\psline[linecolor=black, linewidth=0.08](2.2013888,-0.70144224)(3.8013887,-2.3014421)(2.2013888,-3.9014423)(2.2013888,-3.9014423)
\psline[linecolor=black, linewidth=0.08](2.2013888,-3.9014423)(5.4013886,-2.3014421)(2.2013888,-0.70144224)(2.2013888,-0.70144224)
\psline[linecolor=black, linewidth=0.08](2.2013888,-0.70144224)(4.201389,-0.70144224)(4.201389,-0.70144224)
\psline[linecolor=black, linewidth=0.08](10.601389,-0.70144224)(12.601389,-0.70144224)(12.601389,-0.70144224)
\psdots[linecolor=black, dotsize=0.4](3.8013887,-2.3014421)
\psdots[linecolor=black, dotsize=0.4](2.2013888,-2.3014421)
\psdots[linecolor=black, dotsize=0.4](2.2013888,-0.70144224)
\psdots[linecolor=black, dotsize=0.4](4.201389,-0.70144224)
\psdots[linecolor=black, fillstyle=solid, dotstyle=o, dotsize=0.4, fillcolor=white](0.20138885,-2.3014421)
\psdots[linecolor=black, fillstyle=solid, dotstyle=o, dotsize=0.4, fillcolor=white](5.4013886,-2.3014421)
\psdots[linecolor=black, fillstyle=solid, dotstyle=o, dotsize=0.4, fillcolor=white](2.2013888,-3.9014423)
\psline[linecolor=black, linewidth=0.08](2.2013888,-2.3014421)(3.8013887,-2.3014421)(3.8013887,-2.3014421)
\psline[linecolor=black, linewidth=0.08](10.601389,-0.70144224)(8.601389,-2.3014421)(8.601389,-2.3014421)
\psline[linecolor=black, linewidth=0.08](10.601389,-0.70144224)(10.601389,-2.3014421)(12.201389,-2.3014421)(11.801389,-2.3014421)
\psline[linecolor=black, linewidth=0.08](12.201389,-2.3014421)(10.601389,-0.70144224)(10.601389,-0.70144224)
\psline[linecolor=black, linewidth=0.08](10.601389,-0.70144224)(13.801389,-2.3014421)(13.801389,-2.3014421)
\psdots[linecolor=black, dotsize=0.4](10.601389,-0.70144224)
\psdots[linecolor=black, dotsize=0.4](12.601389,-0.70144224)
\psdots[linecolor=black, dotsize=0.4](13.801389,-2.3014421)
\psdots[linecolor=black, dotsize=0.4](8.601389,-2.3014421)
\psdots[linecolor=black, fillstyle=solid, dotstyle=o, dotsize=0.4, fillcolor=white](10.601389,-2.3014421)
\psdots[linecolor=black, fillstyle=solid, dotstyle=o, dotsize=0.4, fillcolor=white](12.201389,-2.3014421)
\rput[bl](1.8013889,-5.5014424){\LARGE{$H$}}
\rput[bl](10.601389,-5.5014424){\LARGE{$H-v$}}
\rput[bl](2.5013888,-4.1014423){$v$}
\end{pspicture}
}
		\end{center}
		\caption{Graphs $H$ and $H-v$ with same co-even domination numbers} \label{fig2-example}
	\end{figure}

Now we consider to edge removing of a graph and present upper and lower bound for the constructed graph.

\begin{theorem}\label{G-e}
Let $G=(V,E)$ be a graph and $e\in E$. Then,
$$\gamma _{coe}(G) - 2 \leq \gamma _{coe}(G-e)\leq \gamma _{coe}(G) + 2.$$
\end{theorem}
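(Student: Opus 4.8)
The plan is to prove both inequalities symmetrically, using the single observation that removing (or reinserting) the edge $e=uv$ changes only the degrees of its two endpoints $u$ and $v$, each by exactly $1$, while leaving every other vertex degree unchanged. The whole argument then reduces to showing that throwing both endpoints into an optimal co-even dominating set repairs every defect introduced by the edge operation.

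First I would establish the upper bound $\gamma_{coe}(G-e)\le \gamma_{coe}(G)+2$. Let $D$ be a minimum co-even dominating set of $G$ and set $D'=D\cup\{u,v\}$. I would check the two defining properties of a co-even dominating set for $G-e$. For domination: any vertex $w\in V-D'$ satisfies $w\neq u,v$ and $w\notin D$, so in $G$ it had a neighbour $x\in D$ joined by an edge $wx\neq e$; this edge survives in $G-e$ and $x\in D\subseteq D'$, so $w$ is still dominated. For the even-degree condition: $V-D'\subseteq V-D$ and none of these vertices is $u$ or $v$, so their degrees agree in $G$ and $G-e$, hence are even because $D$ is co-even in $G$. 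Thus $D'$ works and $\gamma_{coe}(G-e)\le|D'|\le\gamma_{coe}(G)+2$.

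For the lower bound I would run the same construction in reverse. Let $D''$ be a minimum co-even dominating set of $G-e$ and put $D=D''\cup\{u,v\}$. Since $G$ contains every edge of $G-e$, domination by $D''$ is preserved, and again every vertex of $V-D$ avoids $u,v$ and so keeps the same (even) degree in $G$ as in $G-e$. Hence $D$ is co-even in $G$ and $\gamma_{coe}(G)\le|D''|+2=\gamma_{coe}(G-e)+2$, which rearranges to $\gamma_{coe}(G)-2\le\gamma_{coe}(G-e)$.

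The only place needing genuine care, and what I regard as the main (if modest) obstacle, is verifying that adding the two endpoints really does restore domination after the edge is deleted: one must argue that the sole edge lost is $e=uv$, so the only domination that could break concerns $u$ or $v$ themselves, and both are now forced into the set. Everything else is bookkeeping on parity, which is immediate once one notes that the degree changes are confined to $u$ and $v$. I would also remark that if $u$ or $v$ already lies in the optimal set the bound only improves, so no separate cases are required.
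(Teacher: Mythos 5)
Your proposal is correct and follows essentially the same route as the paper: both bounds come from augmenting an optimal co-even dominating set by the two endpoints $u,v$ of $e$, using the fact that only these two vertices change degree. The paper merely adds a case analysis on whether $u,v$ already lie in the set (which yields the slightly stronger intermediate bounds $+1$ and $+0$ in those cases), while your uniform treatment suffices for the stated theorem and in fact verifies the domination condition more carefully than the paper's ``it is easy to see.''
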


\begin{proof}
Suppose that $e=uv\in E$ and $D_{coe}(G)$ is co-even dominating set of $G$. First we find the upper bound for $\gamma _{coe}(G-e)$. We consider the following cases:
\begin{itemize}
\item[(i)]
$u,v\notin D_{coe}(G)$. In this case, the degree of these vertices should be even. Now by removing $e$, the degree of these vertices are odd and they should be in co-even dominating set of $G-e$. Now by considering $D_{coe}(G)\cup\{u,v\}$ as a dominating set of $G-e$, we have a co-even dominating set for that with size $\gamma _{coe}(G) + 2$. So $\gamma _{coe}(G-e)\leq \gamma _{coe}(G) + 2$.
\item[(ii)]
$u\in D_{coe}(G)$ and $v\notin D_{coe}(G)$. In this case, the degree of $v$ should be even. Now by removing $e$ and the same argument as previous case, $D_{coe}(G)\cup\{v\}$ is a co-even dominating set of $G-e$ and $\gamma _{coe}(G-e)\leq \gamma _{coe}(G) + 1$.
\item[(iii)]
$u,v\in D_{coe}(G)$. By removing edge $e$ and considering $D_{coe}(G)$ as a domination set of $G-e$, we have a co-even dominating set for $G-e$ too. So $\gamma _{coe}(G-e)\leq \gamma _{coe}(G)$.
\end{itemize}
Now we find the lower bound for $\gamma _{coe}(G-e)$.
First we remove $e$. At this step, we find a co-even dominating set for $G-e$. It is easy to see that $D_{coe}(G-e)\cup \{u,v\}$ is a co-even dominating set of $G$. So 
$$\gamma _{coe}(G)\leq \gamma _{coe}(G-e) +2,$$
and therefore we have the result.\qed
\end{proof}

We end this section by showing that the bounds are sharp in the previous Theorem.

\begin{remark}
The  bounds in Theorem \ref{G-e} are sharp.  For the upper bound, it suffices to consider $G$ as shown in Figure \ref{figg-e}. The set of black vertices in $G$ is a co-even dominating set of $G$. Now, by removing edge $e$, the set of black vertices is a co-even dominating set of $G-e$, and $\gamma _{coe}(G-e)= \gamma _{coe}(G) + 2$. For the lower bound, it suffices to consider $H$ as shown in Figure \ref{figg-elower}. The set of black vertices in $H$ and $H-e$ are co-even dominating sets of them, respectively. So $\gamma _{coe}(H-e)= \gamma _{coe}(H) - 2$.
\end{remark}

	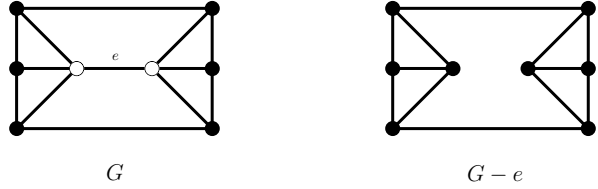
\begin{figure}
		\begin{center}
			\psscalebox{0.5 0.5}
{
\begin{pspicture}(0,-6.6214423)(15.594231,-1.7843268)
\psline[linecolor=black, linewidth=0.08](1.7971154,-3.5814424)(3.7971156,-3.5814424)(3.7971156,-3.5814424)
\psline[linecolor=black, linewidth=0.08](3.7971156,-3.5814424)(5.3971157,-1.9814422)(5.3971157,-3.5814424)(5.3971157,-5.1814423)(3.7971156,-3.5814424)(3.7971156,-3.5814424)
\psline[linecolor=black, linewidth=0.08](3.7971156,-3.5814424)(5.3971157,-3.5814424)(5.3971157,-3.5814424)
\psline[linecolor=black, linewidth=0.08](1.7971154,-3.5814424)(0.19711548,-1.9814422)(0.19711548,-5.1814423)(1.7971154,-3.5814424)(0.19711548,-3.5814424)(0.19711548,-3.5814424)
\psline[linecolor=black, linewidth=0.08](0.19711548,-1.9814422)(5.3971157,-1.9814422)(5.3971157,-1.9814422)
\psline[linecolor=black, linewidth=0.08](0.19711548,-5.1814423)(5.3971157,-5.1814423)(5.3971157,-5.1814423)
\psdots[linecolor=black, dotsize=0.4](0.19711548,-1.9814422)
\psdots[linecolor=black, dotsize=0.4](5.3971157,-1.9814422)
\psdots[linecolor=black, dotsize=0.4](5.3971157,-3.5814424)
\psdots[linecolor=black, dotsize=0.4](5.3971157,-5.1814423)
\psdots[linecolor=black, dotsize=0.4](0.19711548,-5.1814423)
\psdots[linecolor=black, dotsize=0.4](0.19711548,-3.5814424)
\psline[linecolor=black, linewidth=0.08](13.797115,-3.5814424)(15.397116,-1.9814422)(15.397116,-3.5814424)(15.397116,-5.1814423)(13.797115,-3.5814424)(13.797115,-3.5814424)
\psline[linecolor=black, linewidth=0.08](13.797115,-3.5814424)(15.397116,-3.5814424)(15.397116,-3.5814424)
\psline[linecolor=black, linewidth=0.08](11.797115,-3.5814424)(10.197116,-1.9814422)(10.197116,-5.1814423)(11.797115,-3.5814424)(10.197116,-3.5814424)(10.197116,-3.5814424)
\psline[linecolor=black, linewidth=0.08](10.197116,-1.9814422)(15.397116,-1.9814422)(15.397116,-1.9814422)
\psline[linecolor=black, linewidth=0.08](10.197116,-5.1814423)(15.397116,-5.1814423)(15.397116,-5.1814423)
\psdots[linecolor=black, dotsize=0.4](10.197116,-1.9814422)
\psdots[linecolor=black, dotsize=0.4](15.397116,-1.9814422)
\psdots[linecolor=black, dotsize=0.4](15.397116,-3.5814424)
\psdots[linecolor=black, dotsize=0.4](15.397116,-5.1814423)
\psdots[linecolor=black, dotsize=0.4](10.197116,-5.1814423)
\psdots[linecolor=black, dotsize=0.4](10.197116,-3.5814424)
\psdots[linecolor=black, dotsize=0.4](11.797115,-3.5814424)
\psdots[linecolor=black, dotsize=0.4](13.797115,-3.5814424)
\psdots[linecolor=black, dotstyle=o, dotsize=0.4, fillcolor=white](1.7971154,-3.5814424)
\rput[bl](2.5771155,-6.5614424){\LARGE{$G$}}
\rput[bl](12.177115,-6.6214423){\LARGE{$G-e$}}
\rput[bl](2.7371154,-3.3614423){$e$}
\psdots[linecolor=black, dotstyle=o, dotsize=0.4, fillcolor=white](3.7971156,-3.5814424)
\end{pspicture}
}
		\end{center}
		\caption{Graphs $G$ and $G-e$} \label{figg-e}
	\end{figure}

	\begin{figure}
		\begin{center}
			\psscalebox{0.5 0.5}
{
\begin{pspicture}(0,-5.821442)(15.602778,-2.5843267)
\psline[linecolor=black, linewidth=0.08](1.8013889,-4.381442)(3.8013887,-4.381442)(3.8013887,-4.381442)
\psline[linecolor=black, linewidth=0.08](3.8013887,-4.381442)(5.4013886,-4.381442)(5.4013886,-4.381442)
\psline[linecolor=black, linewidth=0.08](0.20138885,-2.7814422)(5.4013886,-2.7814422)(5.4013886,-2.7814422)
\psdots[linecolor=black, dotsize=0.4](0.20138885,-2.7814422)
\psdots[linecolor=black, dotsize=0.4](5.4013886,-2.7814422)
\psline[linecolor=black, linewidth=0.08](13.801389,-4.381442)(15.401389,-4.381442)(15.401389,-4.381442)
\psline[linecolor=black, linewidth=0.08](10.201389,-2.7814422)(15.401389,-2.7814422)(15.401389,-2.7814422)
\psdots[linecolor=black, dotsize=0.4](10.201389,-2.7814422)
\psdots[linecolor=black, dotsize=0.4](15.401389,-2.7814422)
\rput[bl](2.7413888,-4.1614423){$e$}
\psline[linecolor=black, linewidth=0.08](5.4013886,-4.381442)(5.4013886,-2.7814422)(3.8013887,-4.381442)(3.8013887,-4.381442)
\psline[linecolor=black, linewidth=0.08](1.8013889,-4.381442)(0.20138885,-2.7814422)(0.20138885,-4.381442)(1.8013889,-4.381442)(1.8013889,-4.381442)
\psdots[linecolor=black, dotstyle=o, dotsize=0.4, fillcolor=white](5.4013886,-4.381442)
\psdots[linecolor=black, dotstyle=o, dotsize=0.4, fillcolor=white](0.20138885,-4.381442)
\psdots[linecolor=black, dotsize=0.4](1.8013889,-4.381442)
\psdots[linecolor=black, dotsize=0.4](3.8013887,-4.381442)
\psline[linecolor=black, linewidth=0.08](10.201389,-2.7814422)(11.801389,-4.381442)(10.201389,-4.381442)(10.201389,-2.7814422)(10.201389,-2.7814422)
\psline[linecolor=black, linewidth=0.08](15.401389,-2.7814422)(13.801389,-4.381442)(13.801389,-4.381442)
\psline[linecolor=black, linewidth=0.08](15.401389,-4.381442)(15.401389,-2.7814422)(15.401389,-2.7814422)(15.401389,-2.7814422)
\psdots[linecolor=black, dotstyle=o, dotsize=0.4, fillcolor=white](15.401389,-4.381442)
\psdots[linecolor=black, dotstyle=o, dotsize=0.4, fillcolor=white](13.801389,-4.381442)
\psdots[linecolor=black, dotstyle=o, dotsize=0.4, fillcolor=white](11.801389,-4.381442)
\psdots[linecolor=black, dotstyle=o, dotsize=0.4, fillcolor=white](10.201389,-4.381442)
\rput[bl](2.581389,-5.761442){\LARGE{$H$}}
\rput[bl](12.181389,-5.821442){\LARGE{$H-e$}}
\end{pspicture}
}
		\end{center}
		\caption{Graphs $H$ and $H-e$} \label{figg-elower}
	\end{figure}
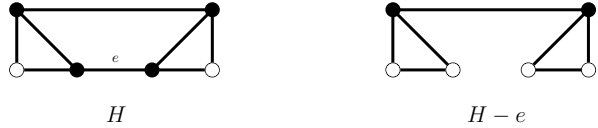

\section{Vertex and edge contraction of a graph}

 Let $v$ be a vertex in graph $G$. The contraction of $v$ in $G$ denoted by $G/v$ is the graph obtained by deleting $v$ and putting a clique on the open neighbourhood of $v$. Note that this operation does not create parallel edges; if two neighbours of $v$ are already adjacent, then they remain simply adjacent (see \cite{Walsh}).  In a graph $G$, contraction of an edge $e$ with endpoints $u,v$ is the replacement of $u$ and $v$ with a single vertex such that edges incident to the new vertex are the edges other than $e$ that were incident with $u$ or $v$. The resulting graph $G/e$ has one less edge than $G$ (\cite{Bondy}). We denote this graph by $G/e$. 
In this section we  examine the effects on $\gamma _{coe}(G)$ when $G$ is modified by an edge contraction and vertex contraction.
 First, we consider to the vertex contraction of a graph and find upper  and lower bound for co-even domination number of that.

\begin{theorem}\label{G/v}
Let $G=(V,E)$ be a graph and $v\in V$. Then,
$$ \gamma _{coe}(G) - \deg(v) -1 \leq \gamma _{coe}(G/v)\leq \gamma _{coe}(G) + \deg(v) -1.$$
\end{theorem}

\begin{proof}
 Suppose that $v\in V$ and $D_{coe}(G)$ is co-even dominating set of $G$. First we find the upper bound for $\gamma _{coe}(G/v)$. We consider the following cases:
\begin{itemize}
\item[(i)]
$\deg(v)$ is odd. So $v\in D_{coe}(G)$. Now, by deleting $v$ and putting a
clique on the open neighbourhood of $v$, we have $G/v$. One can easily check that 
$$\left( D_{coe}(G)-\{v\} \right) \cup N_G(v)$$
 is a co-even dominating set for $G/v$ with size at most $\gamma _{coe}(G)-1 + \deg(v)$. So in this case, $\gamma _{coe}(G/v)\leq \gamma _{coe}(G) + \deg(v) -1$.
\item[(ii)]
$\deg(v)$ is even and $v\in D_{coe}(G)$. By the same argument as previous case, we conclude that $\gamma _{coe}(G/v)\leq \gamma _{coe}(G) + \deg(v) -1$.
\item[(iii)]
$\deg(v)$ is even and $v\notin D_{coe}(G)$. Then at least one vertex in $N_G(v)$ should be in $D_{coe}(G)$. Now $D_{coe}(G)\cup N_G(v)$ is a co-even dominating set for $G/v$ with size at most $\gamma _{coe}(G)-1 + \deg(v)$, and $\gamma _{coe}(G/v)\leq \gamma _{coe}(G) + \deg(v) -1$.
\end{itemize}
Therefore $\gamma _{coe}(G/v)\leq \gamma _{coe}(G) + \deg(v) -1$. Now we find the lower bound for $\gamma _{coe}(G/v)$.
First we remove $v$ and put a clique in open neighbourhood of that. Now we find a co-even dominating set for $G/v$. It is possible that we have $t$ vertices from $N_G(v)$ in $D_{coe}(G/v)$, where $0 \leq t \leq \deg(v)$. Now we keep our dominating set for $G/v$ and remove all the edges we added before and add vertex $v$ and all corresponding edges to that.  In any case, $D_{coe}(G/v) \cup N_G[v]$ is a co-even dominating set for $G$. So  
$$\gamma _{coe}(G)\leq \gamma _{coe}(G/v) + \deg(v) +1,$$
and
therefore we have the result.\qed
\end{proof}

\begin{remark}
The  bounds in Theorem \ref{G/v} are sharp. For the upper bound, it suffices to consider $G$ as shown in Figure \ref{G/vupper}. The set of black vertices in $G$ and $G/v$ are  co-even dominating sets of $G$ and $G/v$, respectively. Hence $\gamma _{coe}(G/v)= \gamma _{coe}(G) + \deg(v) -1$. 
For the lower bound, it suffices to consider $H$ as shown in Figure \ref{G/vlower}. The set of black vertices in $H$ and $H/v$ are co-even dominating sets of them, respectively. So $\gamma _{coe}(H/v)= \gamma _{coe}(H) - \deg(v) -1$.
\end{remark}

	\begin{figure}
		\begin{center}
			\psscalebox{0.5 0.5}
{
\begin{pspicture}(0,-7.5393057)(12.794231,-2.4579167)
\rput[bl](2.1971154,-7.459306){\LARGE{$G$}}
\rput[bl](0.01711548,-4.8593054){$v$}
\rput[bl](10.517116,-7.5393057){\LARGE{$G/v$}}
\psline[linecolor=black, linewidth=0.08](0.19711548,-4.2593055)(1.7971154,-4.2593055)(1.7971154,-4.2593055)
\psline[linecolor=black, linewidth=0.08](0.19711548,-4.2593055)(2.5971155,-2.6593056)(2.5971155,-2.6593056)
\psline[linecolor=black, linewidth=0.08](0.19711548,-4.2593055)(2.5971155,-5.8593054)(2.5971155,-5.8593054)
\psline[linecolor=black, linewidth=0.08](0.19711548,-4.2593055)(4.1971154,-2.6593056)(4.1971154,-2.6593056)
\psline[linecolor=black, linewidth=0.08](0.19711548,-4.2593055)(4.1971154,-5.8593054)(4.1971154,-5.8593054)
\psline[linecolor=black, linewidth=0.08](4.9971156,-4.2593055)(4.1971154,-2.6593056)(4.1971154,-2.6593056)
\psline[linecolor=black, linewidth=0.08](4.9971156,-4.2593055)(2.5971155,-2.6593056)(2.5971155,-2.6593056)
\psline[linecolor=black, linewidth=0.08](4.9971156,-4.2593055)(4.1971154,-5.8593054)(4.1971154,-5.8593054)
\psline[linecolor=black, linewidth=0.08](4.9971156,-4.2593055)(2.5971155,-5.8593054)(2.5971155,-5.8593054)
\psline[linecolor=black, linewidth=0.08](4.9971156,-4.2593055)(1.7971154,-4.2593055)(1.7971154,-4.2593055)
\psdots[linecolor=black, dotstyle=o, dotsize=0.4, fillcolor=white](1.7971154,-4.2593055)
\psdots[linecolor=black, dotstyle=o, dotsize=0.4, fillcolor=white](2.5971155,-5.8593054)
\psdots[linecolor=black, dotstyle=o, dotsize=0.4, fillcolor=white](4.1971154,-5.8593054)
\psdots[linecolor=black, dotstyle=o, dotsize=0.4, fillcolor=white](4.1971154,-2.6593056)
\psdots[linecolor=black, dotstyle=o, dotsize=0.4, fillcolor=white](2.5971155,-2.6593056)
\psdots[linecolor=black, dotsize=0.4](4.9971156,-4.2593055)
\psdots[linecolor=black, dotsize=0.4](0.19711548,-4.2593055)
\psline[linecolor=black, linewidth=0.08](12.5971155,-4.2593055)(11.797115,-2.6593056)(11.797115,-2.6593056)
\psline[linecolor=black, linewidth=0.08](12.5971155,-4.2593055)(10.197116,-2.6593056)(10.197116,-2.6593056)
\psline[linecolor=black, linewidth=0.08](12.5971155,-4.2593055)(11.797115,-5.8593054)(11.797115,-5.8593054)
\psline[linecolor=black, linewidth=0.08](12.5971155,-4.2593055)(10.197116,-5.8593054)(10.197116,-5.8593054)
\psline[linecolor=black, linewidth=0.08](12.5971155,-4.2593055)(9.397116,-4.2593055)(9.397116,-4.2593055)
\psdots[linecolor=black, dotsize=0.4](12.5971155,-4.2593055)
\psline[linecolor=black, linewidth=0.08](11.797115,-2.6593056)(10.197116,-2.6593056)(9.397116,-4.2593055)(10.197116,-5.8593054)(11.797115,-5.8593054)(11.797115,-2.6593056)(9.397116,-4.2593055)(11.797115,-5.8593054)(10.197116,-2.6593056)(10.197116,-5.8593054)(10.197116,-5.8593054)
\psline[linecolor=black, linewidth=0.08](11.797115,-2.6593056)(10.197116,-5.8593054)(10.197116,-5.459306)
\psdots[linecolor=black, dotsize=0.4](11.797115,-2.6593056)
\psdots[linecolor=black, dotsize=0.4](10.197116,-2.6593056)
\psdots[linecolor=black, dotsize=0.4](9.397116,-4.2593055)
\psdots[linecolor=black, dotsize=0.4](10.197116,-5.8593054)
\psdots[linecolor=black, dotsize=0.4](11.797115,-5.8593054)
\end{pspicture}
}
		\end{center}
		\caption{Graphs $G$ and $G/v$} \label{G/vupper}
	\end{figure}
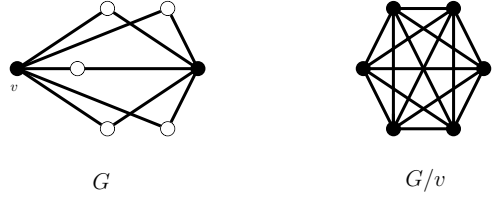

	\begin{figure}
		\begin{center}
			\psscalebox{0.5 0.5}
{
\begin{pspicture}(0,-5.7514424)(15.994231,1.3456731)
\rput[bl](2.9171152,-5.651442){\LARGE{$H$}}
\rput[bl](11.957115,-5.7514424){\LARGE{$H/v$}}
\psdots[linecolor=black, dotsize=0.4](3.3971152,-1.6514423)
\psdots[linecolor=black, dotsize=0.4](4.997115,-0.45144227)
\psdots[linecolor=black, dotsize=0.4](3.3971152,-3.2514422)
\psdots[linecolor=black, dotsize=0.4](1.7971153,-0.45144227)
\psdots[linecolor=black, dotsize=0.4](4.997115,1.1485578)
\psdots[linecolor=black, dotsize=0.4](6.5971155,-0.45144227)
\psdots[linecolor=black, dotsize=0.4](4.5971155,-4.4514422)
\psdots[linecolor=black, dotsize=0.4](2.1971154,-4.4514422)
\psdots[linecolor=black, dotsize=0.4](1.7971153,1.1485578)
\psdots[linecolor=black, dotsize=0.4](0.19711533,-0.45144227)
\psline[linecolor=black, linewidth=0.08](0.19711533,-0.45144227)(1.7971153,-0.45144227)
\psline[linecolor=black, linewidth=0.08](1.7971153,1.1485578)(1.7971153,-0.45144227)
\psline[linecolor=black, linewidth=0.08](1.7971153,-0.45144227)(3.3971152,-1.6514423)
\psline[linecolor=black, linewidth=0.08](3.3971152,-1.6514423)(4.997115,-0.45144227)
\psline[linecolor=black, linewidth=0.08](4.997115,-0.45144227)(4.997115,1.1485578)
\psline[linecolor=black, linewidth=0.08](4.997115,-0.45144227)(6.5971155,-0.45144227)
\psline[linecolor=black, linewidth=0.08](2.1971154,-4.4514422)(3.3971152,-3.2514422)(3.3971152,-3.2514422)
\psline[linecolor=black, linewidth=0.08](3.3971152,-3.2514422)(4.5971155,-4.4514422)(4.5971155,-4.4514422)
\rput[bl](2.9971154,-2.0514421){$v$}
\psdots[linecolor=black, dotsize=0.4](14.197115,1.1485578)
\psdots[linecolor=black, dotsize=0.4](15.797115,-0.45144227)
\psdots[linecolor=black, dotsize=0.4](13.797115,-4.4514422)
\psdots[linecolor=black, dotsize=0.4](11.397116,-4.4514422)
\psdots[linecolor=black, dotsize=0.4](10.997115,1.1485578)
\psdots[linecolor=black, dotsize=0.4](9.397116,-0.45144227)
\psline[linecolor=black, linewidth=0.08](9.397116,-0.45144227)(10.997115,-0.45144227)
\psline[linecolor=black, linewidth=0.08](10.997115,1.1485578)(10.997115,-0.45144227)
\psline[linecolor=black, linewidth=0.08](14.197115,-0.45144227)(14.197115,1.1485578)
\psline[linecolor=black, linewidth=0.08](14.197115,-0.45144227)(15.797115,-0.45144227)
\psline[linecolor=black, linewidth=0.08](11.397116,-4.4514422)(12.5971155,-3.2514422)(12.5971155,-3.2514422)
\psline[linecolor=black, linewidth=0.08](12.5971155,-3.2514422)(13.797115,-4.4514422)(13.797115,-4.4514422)
\psline[linecolor=black, linewidth=0.08](10.997115,-0.45144227)(14.197115,-0.45144227)
\psline[linecolor=black, linewidth=0.08](14.197115,-0.45144227)(12.5971155,-3.2514422)
\psline[linecolor=black, linewidth=0.08](10.997115,-0.45144227)(12.5971155,-3.2514422)
\psline[linecolor=black, linewidth=0.08](3.3971152,-1.6514423)(3.3971152,-3.2514422)(3.3971152,-3.2514422)
\psdots[linecolor=black, dotstyle=o, dotsize=0.4, fillcolor=white](10.997115,-0.45144227)
\psdots[linecolor=black, dotstyle=o, dotsize=0.4, fillcolor=white](14.197115,-0.45144227)
\psdots[linecolor=black, dotstyle=o, dotsize=0.4, fillcolor=white](12.5971155,-3.2514422)
\end{pspicture}
}
		\end{center}
		\caption{Graphs $H$ and $H/v$} \label{G/vlower}
	\end{figure}

As an immediate result of Theorems \ref{G-v} and \ref{G/v}, we have:

\begin{corollary}
Let $G=(V,E)$ be a graph and $v\in V$. Then,
$$ \frac{ \gamma _{coe}(G-v) + \gamma _{coe}(G/v) }{2} - \deg(v) +1 \leq \gamma _{coe}(G)\leq \frac{ \gamma _{coe}(G-v) + \gamma _{coe}(G/v) }{2} + \deg(v) +1.$$
\end{corollary}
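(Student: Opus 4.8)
The plan is to obtain the corollary by purely algebraic combination of the four inequalities already contained in Theorems \ref{G-v} and \ref{G/v}, with no additional graph-theoretic argument required. To keep the bookkeeping transparent, I would abbreviate $a=\gamma_{coe}(G)$, $x=\gamma_{coe}(G-v)$, $y=\gamma_{coe}(G/v)$, and $d=deg(v)$. With this notation Theorem \ref{G-v} reads $a-d-1\leq x\leq a+d-1$, while Theorem \ref{G/v} gives the formally identical two-sided estimate $a-d-1\leq y\leq a+d-1$. The whole corollary then amounts to eliminating $x$ and $y$ from these in favour of their average.

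First I would derive the right-hand (upper) bound on $a$. Adding the two lower halves $x\geq a-d-1$ and $y\geq a-d-1$ yields $x+y\geq 2a-2d-2$; dividing by $2$ and isolating $a$ gives $a\leq \frac{x+y}{2}+d+1$, which is exactly the upper inequality of the statement.

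Next I would derive the left-hand (lower) bound on $a$ symmetrically, this time pairing the two upper halves $x\leq a+d-1$ and $y\leq a+d-1$. Their sum is $x+y\leq 2a+2d-2$, and dividing by $2$ and rearranging produces $\frac{x+y}{2}-d+1\leq a$. Concatenating this with the previous display gives the claimed chain of inequalities.

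There is essentially no obstacle to overcome: the result is an immediate consequence of the two preceding theorems, and the entire content is the elementary step of adding inequalities and halving. The only point that genuinely requires care is the pairing — one must combine the two \emph{lower} bounds to produce the corollary's \emph{upper} bound and the two \emph{upper} bounds to produce its \emph{lower} bound — and then verify that averaging the summed $\pm d$ contributions reproduces the stated offsets $\mp d+1$ exactly. No extremal construction or sharpness analysis enters the proof of the inequalities themselves.
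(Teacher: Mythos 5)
Your proposal is correct and is exactly the argument the paper intends: the corollary is stated as an immediate consequence of Theorems \ref{G-v} and \ref{G/v}, obtained by adding the two lower bounds (to get the corollary's upper bound) and the two upper bounds (to get its lower bound) and halving. The algebra checks out and no further content is needed.
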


Now we consider to the edge contraction of a graph and find upper  and lower bound for co-even domination number of that.

\begin{theorem}\label{G/e}
Let $G=(V,E)$ be a graph and $e\in E$. Then,
$$\gamma _{coe}(G) - 2 \leq \gamma _{coe}(G/e)\leq \gamma _{coe}(G).$$
\end{theorem}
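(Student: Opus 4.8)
The plan is to mirror the strategy of Theorems \ref{G-v}, \ref{G-e} and \ref{G/v}: prove the upper bound by pushing an optimal co-even dominating set of $G$ down to $G/e$, and prove the lower bound by lifting an optimal co-even dominating set of $G/e$ back up to $G$. Write $e=uv$ and let $w$ denote the single vertex of $G/e$ obtained by identifying $u$ and $v$; every other vertex of $G$ survives in $G/e$, and an edge between two surviving vertices is present in $G/e$ exactly when it was present in $G$. The only vertices whose degree can change are $w$, with $\deg_{G/e}(w)=\deg_G(u)+\deg_G(v)-2$, and, depending on the contraction convention, the common neighbours of $u$ and $v$; keeping track of the parity of these degrees is the technical heart of the argument.

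For the upper bound $\gamma_{coe}(G/e)\le\gamma_{coe}(G)$, I would start from a minimum co-even dominating set $D=D_{coe}(G)$ and split into cases according to how many of $u,v$ lie in $D$. If both $u,v\in D$, take $(D\setminus\{u,v\})\cup\{w\}$, which dominates $G/e$ and has size $|D|-1$. If exactly one of them, say $u$, lies in $D$, take $(D\setminus\{u\})\cup\{w\}$ of size $|D|$, where $w$ inherits the domination previously provided by $u$ and $v$. If neither lies in $D$, then by Proposition \ref{pro-sha} both have even degree, so $\deg_{G/e}(w)$ is even, and since $u$ already had a neighbour in $D$ that neighbour dominates $w$ in $G/e$; thus $D$ itself, now viewed as a subset of $V(G/e)$, is co-even dominating for $G/e$ and has size $|D|$. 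In every case the resulting set has size at most $\gamma_{coe}(G)$, and one checks that each vertex left outside still has even degree in $G/e$.

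For the lower bound I would establish $\gamma_{coe}(G)\le\gamma_{coe}(G/e)+2$ by lifting a minimum co-even dominating set $D'=D_{coe}(G/e)$. If $w\in D'$, replace $w$ by both of its preimages and set $D=(D'\setminus\{w\})\cup\{u,v\}$, of size $|D'|+1$; if $w\notin D'$, set $D=D'\cup\{u,v\}$, of size $|D'|+2$. In both cases $u$ and $v$ jointly reproduce every adjacency that $w$ supplied, so $D$ dominates $G$, and the vertices left outside $D$ are exactly those left outside $D'$, whose degrees, and hence parities, are unchanged under the lift. Therefore $D$ is co-even dominating for $G$ of size at most $\gamma_{coe}(G/e)+2$, which rearranges to the stated lower bound.

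The step I expect to demand the most care is the parity bookkeeping for vertices adjacent to both $u$ and $v$. Under the convention used here, in which $G/e$ has exactly one fewer edge than $G$, such a common neighbour retains both of its incidences to the merged vertex, so its degree, and thus its parity, is preserved and the case analysis closes. If instead one suppressed the resulting parallel edges, each common neighbour would lose one in degree and could flip from even to odd; this is precisely the obstruction that would have to be ruled out, so I would state the convention explicitly before running the cases.
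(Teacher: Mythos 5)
Your proposal is correct and takes essentially the same route as the paper: the identical three-case analysis on how many of $u,v$ lie in $D_{coe}(G)$ for the upper bound (yielding the sets $\left(D\setminus\{u,v\}\right)\cup\{w\}$, $\left(D\setminus\{u\}\right)\cup\{w\}$, and $D$ itself), and the same lift $\left(D_{coe}(G/e)\setminus\{w\}\right)\cup\{u,v\}$ for the lower bound. Your explicit bookkeeping of the contraction convention and the parity of common neighbours of $u$ and $v$ is a point the paper leaves implicit, but it refines rather than changes the argument.
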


\begin{proof}
Suppose that $e=uv\in E$ and $D_{coe}(G)$ is co-even dominating set of $G$. Also let $w$ be the vertex which is replacement of $u$ and $v$ in $G/e$. First, we find the upper bound for $\gamma _{coe}(G/e)$. We consider the following cases:
\begin{itemize}
\item[(i)]
$u,v\notin D_{coe}(G)$. In this case, the degree of these vertices should be even. Now by removing $e$, the degree of these vertices are odd and therefore the degree of $w$ is even in $G/e$. Now by considering $D_{coe}(G)$ as a dominating set of $G/e$ too, we have a co-even dominating set for that with size $\gamma _{coe}(G)$. So $\gamma _{coe}(G/e)\leq \gamma _{coe}(G)$.
\item[(ii)]
$u\in D_{coe}(G)$ and $v\notin D_{coe}(G)$. In this case, $\left( D_{coe}(G)-\{u\} \right) \cup\{w\}$ is a co-even dominating set of $G/e$ and $\gamma _{coe}(G/e)\leq \gamma _{coe}(G)$.
\item[(iii)]
$u,v\in D_{coe}(G)$. In this case, $\left( D_{coe}(G)-\{u,v\} \right) \cup\{w\}$ is a co-even dominating set of $G/e$ and $\gamma _{coe}(G/e)\leq \gamma _{coe}(G)-1$.
\end{itemize}
So $\gamma _{coe}(G/e)\leq \gamma _{coe}(G)$. Now we find the lower bound for $\gamma _{coe}(G/e)$.
First we consider to $G/e$ and find a co-even dominating set for that. In the worst case, $w\notin D_{coe}(G/e)$ and the degree of $u$ and $v$ are odd in $G$. 
 So in any case, $\left( D_{coe}(G)-\{w\} \right) \cup\{u,v\}$  is a co-even dominating set for $G$. Hence
$\gamma _{coe}(G)\leq \gamma _{coe}(G/e) +2$,
and therefore we have the result.\qed
\end{proof}

\begin{remark}\label{remark}
The  bounds in Theorem \ref{G/e} are sharp.  For the upper bound, it suffices to consider $G$ as shown in Figure \ref{G/eupper}. The set of black vertices in $G$ is a co-even dominating set of $G$. Also, the set of black vertices in $G/e$ is a co-even dominating set of $G/e$, and $\gamma _{coe}(G/e)= \gamma _{coe}(G) $. For the lower bound, it suffices to consider $H$ as shown in Figure \ref{G/elower}. The set of black vertices in $H$ and $H/e$ are co-even dominating sets of them, respectively. So $\gamma _{coe}(H/e)= \gamma _{coe}(H) - 2$.
\end{remark}

	\begin{figure}
		\begin{center}
			\psscalebox{0.5 0.5}
{
\begin{pspicture}(0,-7.5414424)(13.194231,-4.064327)
\rput[bl](2.5971153,-7.4614425){\LARGE{$G$}}
\rput[bl](4.0771155,-5.9414425){$v$}
\rput[bl](10.517116,-7.5414424){\LARGE{$G/e$}}
\rput[bl](2.0371153,-5.9814425){$u$}
\rput[bl](3.0571153,-5.241442){$e$}
\psdots[linecolor=black, dotsize=0.4](0.9971153,-6.6614423)
\psdots[linecolor=black, dotsize=0.4](0.9971153,-4.261442)
\psline[linecolor=black, linewidth=0.08](4.1971154,-5.4614425)(2.1971154,-5.4614425)(2.1971154,-5.4614425)
\psdots[linecolor=black, dotsize=0.4](0.19711533,-5.4614425)
\psline[linecolor=black, linewidth=0.08](2.1971154,-5.4614425)(0.9971153,-4.261442)(0.9971153,-4.261442)
\psline[linecolor=black, linewidth=0.08](2.1971154,-5.4614425)(0.9971153,-6.6614423)(0.9971153,-6.6614423)
\psline[linecolor=black, linewidth=0.08](2.1971154,-5.4614425)(0.19711533,-5.4614425)(0.19711533,-5.4614425)
\psdots[linecolor=black, dotsize=0.4](5.397115,-4.261442)
\psdots[linecolor=black, dotsize=0.4](6.1971154,-5.4614425)
\psdots[linecolor=black, dotsize=0.4](5.397115,-6.6614423)
\psline[linecolor=black, linewidth=0.08](4.1971154,-5.4614425)(5.397115,-4.261442)(5.397115,-4.261442)
\psline[linecolor=black, linewidth=0.08](4.1971154,-5.4614425)(6.1971154,-5.4614425)
\psline[linecolor=black, linewidth=0.08](4.1971154,-5.4614425)(5.397115,-6.6614423)
\psdots[linecolor=black, dotstyle=o, dotsize=0.4, fillcolor=white](2.1971154,-5.4614425)
\psdots[linecolor=black, dotstyle=o, dotsize=0.4, fillcolor=white](4.1971154,-5.4614425)
\psdots[linecolor=black, dotsize=0.4](9.797115,-6.6614423)
\psdots[linecolor=black, dotsize=0.4](9.797115,-4.261442)
\psdots[linecolor=black, dotsize=0.4](8.997115,-5.4614425)
\psline[linecolor=black, linewidth=0.08](10.997115,-5.4614425)(9.797115,-4.261442)(9.797115,-4.261442)
\psline[linecolor=black, linewidth=0.08](10.997115,-5.4614425)(9.797115,-6.6614423)(9.797115,-6.6614423)
\psline[linecolor=black, linewidth=0.08](10.997115,-5.4614425)(8.997115,-5.4614425)(8.997115,-5.4614425)
\psdots[linecolor=black, dotsize=0.4](12.197115,-4.261442)
\psdots[linecolor=black, dotsize=0.4](12.997115,-5.4614425)
\psdots[linecolor=black, dotsize=0.4](12.197115,-6.6614423)
\psline[linecolor=black, linewidth=0.08](10.997115,-5.4614425)(12.197115,-4.261442)(12.197115,-4.261442)
\psline[linecolor=black, linewidth=0.08](10.997115,-5.4614425)(12.997115,-5.4614425)
\psline[linecolor=black, linewidth=0.08](10.997115,-5.4614425)(12.197115,-6.6614423)
\psdots[linecolor=black, dotstyle=o, dotsize=0.4, fillcolor=white](10.997115,-5.4614425)
\psdots[linecolor=black, dotstyle=o, dotsize=0.4, fillcolor=white](10.997115,-5.4614425)
\rput[bl](10.837115,-5.0614424){$w$}
\end{pspicture}
}
		\end{center}
		\caption{Graphs $G$ and $G/e$} \label{G/eupper}
	\end{figure}

	\begin{figure}
		\begin{center}
			\psscalebox{0.5 0.5}
{
\begin{pspicture}(0,-7.5414424)(11.594231,-4.064327)
\rput[bl](3.2771156,-5.9414425){$v$}
\rput[bl](1.2371155,-5.9814425){$u$}
\rput[bl](2.2571154,-5.241442){$e$}
\psdots[linecolor=black, dotsize=0.4](0.19711548,-6.6614423)
\psdots[linecolor=black, dotsize=0.4](0.19711548,-4.261442)
\psline[linecolor=black, linewidth=0.08](3.3971155,-5.4614425)(1.3971155,-5.4614425)(1.3971155,-5.4614425)
\psline[linecolor=black, linewidth=0.08](1.3971155,-5.4614425)(0.19711548,-4.261442)(0.19711548,-4.261442)
\psline[linecolor=black, linewidth=0.08](1.3971155,-5.4614425)(0.19711548,-6.6614423)(0.19711548,-6.6614423)
\psdots[linecolor=black, dotsize=0.4](4.5971155,-4.261442)
\psdots[linecolor=black, dotsize=0.4](4.5971155,-6.6614423)
\psline[linecolor=black, linewidth=0.08](3.3971155,-5.4614425)(4.5971155,-4.261442)(4.5971155,-4.261442)
\psline[linecolor=black, linewidth=0.08](3.3971155,-5.4614425)(4.5971155,-6.6614423)
\psdots[linecolor=black, dotsize=0.4](8.997115,-6.6614423)
\psdots[linecolor=black, dotsize=0.4](8.997115,-4.261442)
\psline[linecolor=black, linewidth=0.08](10.197116,-5.4614425)(8.997115,-4.261442)(8.997115,-4.261442)
\psline[linecolor=black, linewidth=0.08](10.197116,-5.4614425)(8.997115,-6.6614423)(8.997115,-6.6614423)
\psdots[linecolor=black, dotsize=0.4](11.397116,-4.261442)
\psdots[linecolor=black, dotsize=0.4](11.397116,-6.6614423)
\psline[linecolor=black, linewidth=0.08](10.197116,-5.4614425)(11.397116,-4.261442)(11.397116,-4.261442)
\psline[linecolor=black, linewidth=0.08](10.197116,-5.4614425)(11.397116,-6.6614423)
\psdots[linecolor=black, dotstyle=o, dotsize=0.4, fillcolor=white](10.197116,-5.4614425)
\psdots[linecolor=black, dotstyle=o, dotsize=0.4, fillcolor=white](10.197116,-5.4614425)
\rput[bl](10.037115,-5.0614424){$w$}
\rput[bl](1.7971154,-7.4614425){\LARGE{$H$}}
\rput[bl](9.717115,-7.5414424){\LARGE{$H/e$}}
\psdots[linecolor=black, dotsize=0.4](3.3971155,-5.4614425)
\psdots[linecolor=black, dotsize=0.4](1.3971155,-5.4614425)
\end{pspicture}
}
		\end{center}
		\caption{Graphs $H$ and $H/e$} \label{G/elower}
	\end{figure}
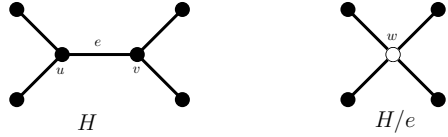

In Theorem \ref{G/e}, we showed that
$\gamma _{coe}(G) - 2 \leq \gamma _{coe}(G/e)\leq \gamma _{coe}(G),$ for every $e\in E$. Also in Remark \ref{remark}, we  concluded that these bounds are sharp. In 
the following example, we show that $\gamma _{coe}(G/e)$ can be  $\gamma _{coe}(G)-1$ too:

\begin{example}
Consider $G$ as shown in Figure \ref{G/eexm}. The set of black vertices in $G$ is a co-even dominating set of $G$. Also, the set of black vertices in $G/e$ is a co-even dominating set of $G/e$, and $\gamma _{coe}(G/e)= \gamma _{coe}(G) - 1$.
\end{example}

	\begin{figure}[!h]
		\begin{center}
			\psscalebox{0.5 0.5}
{
\begin{pspicture}(0,-7.341442)(11.998505,-3.4643269)
\rput[bl](1.7971154,-7.261442){\LARGE{$G$}}
\rput[bl](3.2771156,-5.741442){$v$}
\rput[bl](9.717115,-7.341442){\LARGE{$G/e$}}
\rput[bl](1.2371155,-5.781442){$u$}
\rput[bl](10.057116,-5.761442){$w$}
\rput[bl](2.2571154,-5.0414424){$e$}
\psdots[linecolor=black, dotsize=0.4](0.19711548,-6.0614424)
\psdots[linecolor=black, dotsize=0.4](0.19711548,-4.4614425)
\psdots[linecolor=black, dotsize=0.4](1.3971155,-5.261442)
\psdots[linecolor=black, dotsize=0.4](3.3971155,-5.261442)
\psdots[linecolor=black, dotsize=0.4](3.3971155,-3.6614423)
\psline[linecolor=black, linewidth=0.08](3.3971155,-5.261442)(1.3971155,-5.261442)(1.3971155,-5.261442)
\psline[linecolor=black, linewidth=0.08](1.3971155,-5.261442)(0.19711548,-4.4614425)(0.19711548,-4.4614425)
\psline[linecolor=black, linewidth=0.08](1.3971155,-5.261442)(0.19711548,-6.0614424)(0.19711548,-6.0614424)
\psline[linecolor=black, linewidth=0.08](3.3971155,-5.261442)(4.9971156,-4.4614425)(4.9971156,-6.0614424)(3.3971155,-5.261442)(3.3971155,-5.261442)
\psdots[linecolor=black, dotstyle=o, dotsize=0.4, fillcolor=white](4.9971156,-4.4614425)
\psdots[linecolor=black, dotstyle=o, dotsize=0.4, fillcolor=white](4.9971156,-6.0614424)
\psline[linecolor=black, linewidth=0.08](3.3971155,-3.6614423)(3.3971155,-5.261442)(3.3971155,-5.261442)
\psdots[linecolor=black, dotsize=0.4](8.997115,-6.0614424)
\psdots[linecolor=black, dotsize=0.4](8.997115,-4.4614425)
\psdots[linecolor=black, dotsize=0.4](10.197116,-5.261442)
\psdots[linecolor=black, dotsize=0.4](10.197116,-5.261442)
\psdots[linecolor=black, dotsize=0.4](10.197116,-3.6614423)
\psline[linecolor=black, linewidth=0.08](10.197116,-5.261442)(8.997115,-4.4614425)(8.997115,-4.4614425)
\psline[linecolor=black, linewidth=0.08](10.197116,-5.261442)(8.997115,-6.0614424)(8.997115,-6.0614424)
\psline[linecolor=black, linewidth=0.08](10.197116,-5.261442)(11.797115,-4.4614425)(11.797115,-6.0614424)(10.197116,-5.261442)(10.197116,-5.261442)
\psdots[linecolor=black, dotstyle=o, dotsize=0.4, fillcolor=white](11.797115,-4.4614425)
\psdots[linecolor=black, dotstyle=o, dotsize=0.4, fillcolor=white](11.797115,-6.0614424)
\psline[linecolor=black, linewidth=0.08](10.197116,-3.6614423)(10.197116,-5.261442)(10.197116,-5.261442)
\end{pspicture}
}
		\end{center}
		\caption{Graphs $G$ and $G/e$} \label{G/eexm}
	\end{figure}

We end this section by an immediate result of Theorems \ref{G-e} and \ref{G/e}:

\begin{corollary}
Let $G=(V,E)$ be a graph and $e\in E$. Then,
$$ \frac{ \gamma _{coe}(G-e) + \gamma _{coe}(G/e) }{2} - 1 \leq \gamma _{coe}(G)\leq \frac{ \gamma _{coe}(G-e) + \gamma _{coe}(G/e) }{2} +2.$$
\end{corollary}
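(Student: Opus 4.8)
The plan is to combine the two inequality chains supplied by Theorems \ref{G-e} and \ref{G/e} by adding them in suitably chosen pairs. Write $a=\gamma_{coe}(G)$, $b=\gamma_{coe}(G-e)$, and $c=\gamma_{coe}(G/e)$. Theorem \ref{G-e} gives $a-2\le b\le a+2$, while Theorem \ref{G/e} gives the asymmetric chain $a-2\le c\le a$. The target statement $\frac{b+c}{2}-1\le a\le \frac{b+c}{2}+2$ is, after clearing the factor of two, equivalent to the single double inequality $b+c-2\le 2a\le b+c+4$, so it suffices to establish each of these two bounds on $2a$ separately.

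For the upper bound $a\le \frac{b+c}{2}+2$, I would add the two \emph{lower} estimates $b\ge a-2$ and $c\ge a-2$ to obtain $b+c\ge 2a-4$, which rearranges to $2a\le b+c+4$; dividing by two yields the claim. For the lower bound $\frac{b+c}{2}-1\le a$, I would instead add the two \emph{upper} estimates $b\le a+2$ and $c\le a$ to obtain $b+c\le 2a+2$, which rearranges to $b+c-2\le 2a$; dividing by two gives $\frac{b+c}{2}-1\le a$. Stacking the two resulting displays produces the corollary.

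There is no genuine obstacle here, since both halves are immediate arithmetic consequences of the preceding two theorems; the only point requiring attention is bookkeeping, namely pairing each desired bound with the correct side of each theorem. It is worth noting explicitly that the constant $-1$ (rather than $-2$) in the lower bound on $\gamma_{coe}(G)$ comes precisely from the sharper edge-contraction estimate $c\le a$ in Theorem \ref{G/e}: had the edge-removal-type bound $c\le a+2$ been all that were available, the symmetric combination would have produced only $\frac{b+c}{2}-2\le a$. Thus the asymmetry of Theorem \ref{G/e} is what is doing the work in tightening the lower bound, and I would flag this in the write-up rather than let it pass silently.
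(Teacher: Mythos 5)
Your proof is correct and is exactly the argument the paper intends: the corollary is stated there as an immediate consequence of Theorems \ref{G-e} and \ref{G/e}, obtained by pairing the lower estimates $\gamma_{coe}(G-e)\ge\gamma_{coe}(G)-2$, $\gamma_{coe}(G/e)\ge\gamma_{coe}(G)-2$ for one side and the upper estimates $\gamma_{coe}(G-e)\le\gamma_{coe}(G)+2$, $\gamma_{coe}(G/e)\le\gamma_{coe}(G)$ for the other, just as you do. Your explicit observation that the sharper contraction bound $\gamma_{coe}(G/e)\le\gamma_{coe}(G)$ is what yields the constant $-1$ rather than $-2$ is a worthwhile clarification of why the stated constants hold.
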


\section{Conclusions}

In this paper, we obtained some lower and upper bounds of co-even domination number of graphs which constructed  by vertex and edge removing, and also vertex and edge contraction regarding the co-even domination number of the main graph.  Also
we showed that these bounds are sharp. Then, we presented upper and lower bounds for co-even domination number of a graph regarding vertex (edge) removal and contraction of that as immediate result of our previous results. Future topics of interest for future research include the following suggestions:

\begin{itemize}
\item[(i)]
Finding co-even domination number of other unary operations of graphs such as subdivision of a graph (see \cite{Babu,Nima1} for some results in this topic), power of a graph, etc.
\item[(ii)]
Finding co-even domination number of other operations of graphs such as graph rewriting, line graph, Mycielskian, etc.
\item[(iii)]
Finding co-even domination number of interval graphs, intersection graphs, word-representable graphs, etc.
\end{itemize}


\begin{thebibliography}{99}

	\bibitem{Babu} Ch. S. Babu, A. A. Diwan, Subdivisions of graphs: A generalization of paths and cycles, {\it Discrete  Math.\/}, 308 (2008) 4479–4486.

   \bibitem{Bondy} J.A. Bondy and U.S.R. Murty,  Graph Theory with Applications, {\it American Elsevier, MacMillan, New York, London}, (1976).




	\bibitem{Nima} N. Ghanbari, Co-even domination number of some binary operations on graphs, Submitted. Preprint available at \texttt{https://arxiv.org/abs/2111.01845.}

	\bibitem{Nima1}	N. Ghanbari, Secure domination number of $k$-subdivision of graphs, submitted. Preprint available at \texttt{https://arxiv.org/abs/2110.09190.}

	\bibitem{domination} T.W. Haynes, S.T. Hedetniemi, P.J. Slater, Fundamentals of domination in graphs, {\it Marcel Dekker\/}, NewYork, (1998).




	\bibitem{Sha} M. M. Shalaan, A. A. Omran,   Co-even Domination in Graphs, {\it Int. J. Control Automat.\/}, 13(3) (2020)  330-334.



	 \bibitem{Walsh}	M. Walsh,  The hub number of a graph, {\it Int. J. Math. Comput. Sci.}, 1 (2006) 117-124.
	 
	 
\end{thebibliography}
\end{document}